\numberwithin{equation}{section}
\definecolor{dark-green}{rgb}{0.1,0.1,0.3}
\newcommand{\floor}[1]{\left\lfloor #1 \right\rfloor}
\newcommand{\ceil}[1]{\left\lceil #1 \right\rceil}
\newcommand{\sg}{\sigma}
\newcommand{\al}{\alpha}
\newcommand{\mb}{\mathbb}
\newcommand{\bt}{\beta}
\newcommand{\lm}{\limits}
\newcommand{\st}{\substack}
\newcommand{\sr}{\sqrt}
\newcommand{\fr}{\frac}
\newcommand{\tps}{\texorpdfstring}
\newcommand{\F}{\mathscr{F}}
\newcommand{\N}[1]{\mb{N}^{#1}}
\newcommand{\Z}[1]{\mb{Z}^{+}(#1)}
\newtheorem{thm}{Theorem}[section]
\newtheorem{cor}[thm]{Corollary}
\newtheorem{lem}[thm]{Lemma}
\newtheorem{prop}[thm]{Proposition}
\theoremstyle{definition}
\newtheorem{defn}[thm]{Definition}
\newtheorem*{ntn}{Notation}
\newtheorem{rem}[thm]{Remark}
\newtheorem{exm}[thm]{Example}
\begin{document}
\title{On the number of factorizations of an integer}

\author{R. Balasubramanian}
\author{Priyamvad Srivastav}
\address{Institute of Mathematical Sciences\\ Taramani\\ Chennai\\ India-600113 and Homi Bhabha National Institute\\ Training School Complex\\ Anushakti Nagar\\ Mumbai\\ India-400094.}
\email[R. Balasubramanian]{balu@imsc.res.in}
\email[Priyamvad Srivastav]{priyamvads@imsc.res.in}

\date{}

\begin{abstract}
Let $f(n)$ denote the number of unordered factorizations of a positive integer $n$ into factors larger than $1$. We show that the number of distinct values of $f(n)$, less than or equal to $x$, is at most 
$\exp \left( C \sqrt{\frac{\log x}{\log \log x}} \left(1 + o(1) \right) \right)$, 
where $C=2\pi\sqrt{2/3}$ and $x$ is sufficiently large. This improves upon a previous result of the first author and F. Luca.
\end{abstract}


\subjclass[2010]{Primary: 11A51, 05A99, Secondary: 11B73.}
\keywords{Factorizations, Generalized partitions}


\maketitle

\section{Introduction}
\thispagestyle{empty}
Let $f(n)$ denote the number of unordered factorizations of $n$ into factors larger than $1$. More precisely, $f(n)$ is the number of tuples $(n_1,\dots, n_r)$, such that $1<n_1 \leq n_2 \leq \dots \leq n_r$ and $n=n_1 n_2 \dots n_r$. 
For example, $f(18)=4$, since $18$ has the factorizations
$$18, \quad 2 \cdot 9, \quad 3 \cdot 6, \quad 2 \cdot 3 \cdot 3.$$ 
The function $f(n)$ is a multiplicative analogue of the the partition function. \vskip 0.04in
There are various results on the properties of this function. The problem of determining the exact nature of $f(n)$ was considered by Oppenheim \cite{1}. He proved that
\begin{equation} 
\sum\lm_{n \leq x} f(n) \sim \fr{x \exp(2 \sr{\log x})}{2 \sr{\pi} (\log x)^{3/4}}.
\end{equation}
\medskip 
Further investigation was carried out by E.R. Canfield, P. Erd\H{o}s and C. Pomerance \cite{2}, who showed that the maximal order of $f(n)$ is
\begin{equation} 
n \exp\left( (-1 + o(1)) \fr{l_1(n)l_3(n)}{l_2(n)} \right),
\end{equation}
where $l_k(n)$ is the $k$-fold iteration of the natural logarithm. \\ \vskip 0.05in
\begin{defn}
\label{F}
For any $x \geq 1$, let $\mathscr{F}(x)$ be the the set of values of $f(n)$, not exceeding $x$, i.e. 
\begin{equation} 
\F(x) = \left\{f(n): f(n) \leq x \right\}.
\end{equation}
\end{defn}

\medskip

In \cite{2}, the authors claimed that they could prove 
$ \# \F(x) = x^{o(1)}$, as $x \to \infty$. In this connection, F. Luca, A. Mukhopadhyay and K. Srinivas \cite{3}, proved that
\begin{equation} 
\# \F(x) = x^{O\left( \log \log \log x / \log \log x \right)}.
\end{equation}
\medskip
This bound was improved in \cite{4} by the first author and F. Luca. They proved
\begin{equation} 
\# \F(x) \leq \exp\left(9(\log x)^{2/3}\right), \quad \text{for all} \ x \geq 1.
\end{equation}
\vskip 0.07in
In this paper, we further improve the above result. We prove
\begin{thm}
\label{main}
Let $C = 2 \pi \sr{2/3}$ and $x$ be sufficiently large. Then
$$ \# \F(x) \leq \exp\left(C\sr{\fr{\log x}{\log \log x}} \left( 1 + O\left( \fr{\log \log \log x}{\log \log x} \right) \right) \right).$$
\end{thm}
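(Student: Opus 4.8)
The plan is to reduce the whole problem to a counting problem for integer partitions and then feed it into the Hardy--Ramanujan asymptotics. First I would observe that a factorization of $n$ into factors $>1$ is exactly a partition of the multiset of prime factors of $n$ into non-empty sub-multisets, so $f(n)$ depends only on the multiset of exponents of $n$; writing $n \leftrightarrow \lambda(n)$ for the associated integer partition, one gets a well-defined $f(\lambda)$. Moreover $f$ is strictly increasing under adjoining parts to $\lambda$: a factorization of $\mathrm{rad}(m)$ lifts injectively to one of a multiple $m$ by absorbing each extra prime power into the unique factor divisible by that prime. Hence $\#\F(x) \le \#\{\lambda : f(\lambda)\le x\}$, and it suffices to count such $\lambda$.

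The next step records the lower bounds on $f$ that constrain $\lambda$. Lifting from the radical gives $f(n)\ge f(\mathrm{rad}(n)) = B_{\omega(n)}$, the Bell number; inverting $\log B_m \sim m\log m$ shows $f(n)\le x \Rightarrow \omega(n)\le (1+o(1))\tfrac{\log x}{\log\log x}$, and it is precisely this inversion that produces the error term $O\!\left(\tfrac{\log\log\log x}{\log\log x}\right)$. Since $f(ab)\ge f(a)f(b)$ for coprime $a,b$ (juxtapose factorizations), applying this to the prime-power components gives $f(n)\ge \prod_i p(a_i)$ with $a_i$ the exponents and $p(\cdot)$ the ordinary partition function, so (using $\log p(a)\gg \sqrt a$) one gets $\sum_i \sqrt{a_i}\ll \log x$. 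Finally an orbit-counting (Burnside) bound $f(n)\ge B_{\Omega(n)}/\prod_i a_i!$ controls "flat" partitions — e.g. it already yields $f(\text{staircase of length }s)\ge \exp\!\big(\tfrac12 s^2\log s\,(1+o(1))\big)$, which is of the correct order of magnitude.

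Combining these, I would show that a partition with $f(\lambda)\le x$ falls into one of two classes: either it has few parts and all parts of size $O((\log x)^2)$ — a regime that yields only $\exp\!\big(o(\sqrt{\log x/\log\log x})\big)$ distinct values and, in particular, absorbs the near-prime-powers $n = p^N$ (which have small $f$ but $\Omega$ as large as $(\log x)^2$, and account for only $O((\log x)^2)$ values) — or else $|\lambda| = \Omega(n) \le (4+o(1))\tfrac{\log x}{\log\log x}$. Thus, outside a negligible set, every value of $\F(x)$ equals $f(\lambda)$ for a partition of some integer $\le M := (4+o(1))\tfrac{\log x}{\log\log x}$. By the Hardy--Ramanujan formula the number of partitions of all integers $\le M$ is $\exp\!\big(\pi\sqrt{2M/3}\,(1+o(1))\big)$, and with this $M$ one has $\pi\sqrt{2M/3} = 2\pi\sqrt{2/3}\,\sqrt{\log x/\log\log x}\,(1+o(1)) = C\sqrt{\log x/\log\log x}\,(1+o(1))$, giving the theorem.

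The hard part is the size bound of the previous paragraph: carrying out the trade-off between the number of parts (bounded by the Bell estimate) and the sizes of the parts (bounded via $\sum_i\sqrt{a_i}\ll\log x$ together with the Burnside bound on flat pieces), and showing cleanly that — away from the near-prime-power regime — these constraints force $|\lambda|$ to be at most a constant times $\log x/\log\log x$, with the constant controlling $C$. Everything after that is bookkeeping plus the classical asymptotics for $p(m)$ and $B_m$.
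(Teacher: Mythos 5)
Your outline has the same architecture as the paper's proof: pass from $n$ to the partition $\lambda$ of exponents, show that (outside a negligible exceptional set) the relevant sum of exponents is at most $M=(4+o(1))\frac{\log x}{\log\log x}$, and then count partitions of integers up to $M$ via $p(m)\le\exp\bigl(\pi\sqrt{2m/3}\bigr)$, which is exactly where the constant $C=2\pi\sqrt{2/3}$ comes from (compare Lemma \ref{maxp} and (\ref{tent})). The problem is that the step you yourself flag as ``the hard part'' is the entire content of the paper, and it is not actually carried out: the bound $M=(4+o(1))\frac{\log x}{\log\log x}$, and in particular the constant $4$ that determines $C$, is asserted rather than derived from your tools. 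In the paper this bound comes from a hypergeometric-series lower bound for the generalized partition function (Lemma \ref{hypergeometric}), optimized at the parameter $N$ defined by $g(\bm{\al},z)=1$, which yields $\sum_i\al_i\log(1+N/\al_i)\le 2\log x(1+o(1))$ and hence $\sum_{I}\al_i\le\frac{4\log x}{\log\log x}(1+o(1))$ for the exponents below the threshold $A(N+1)$. Your proposed substitutes --- the Bell-number bound on $\omega(n)$, superadditivity $f(ab)\ge f(a)f(b)$, and the orbit-counting bound $f(n)\ge B_{\Omega}/\prod_i a_i!$ --- are all correct, and the last is genuinely interesting: applied to the portion of $n$ with small exponents it does force $\sum a_i\ll\log x/\log\log x$ and, if pushed, may even give a smaller constant than $4$. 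But none of the required bookkeeping (choice of the small/large threshold, control of the $\sum\log a_i!$ error terms, extraction of the constant) is present, so as written this is a plausible program, not a proof.

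There is also a concrete structural error. Your dichotomy --- either ``few parts, all of size $O((\log x)^2)$'' or ``$|\lambda|\le(4+o(1))\frac{\log x}{\log\log x}$'' --- does not cover all partitions satisfying the constraints you establish: nothing you prove excludes a $\lambda$ with one part of size $\asymp(\log x)^2$ together with $\asymp\frac{\log x}{\log\log x}$ parts equal to $1$, which has many parts and has $|\lambda|\asymp(\log x)^2$, hence lies in neither class. The repair is the one the paper makes: split the \emph{parts of each individual partition} into small and large (the sets $I$ and $J$), show that the number of large parts is at most $\frac{4\sqrt{\log x}}{(\log\log x)^5}$ (Lemma \ref{r2}) so that their $(\log^2 x)^{\#J}$ possible configurations contribute only $\exp\bigl(o\bigl(\sqrt{\log x/\log\log x}\bigr)\bigr)$, apply the partition count only to the small parts, and take the \emph{product} of the two counts rather than a sum over two classes of partitions. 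With that repair, and with an honest derivation of the bound on $\sum_{\al_i\in I}\al_i$ from your Burnside inequality, your approach would close; at present both pieces are missing.
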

\medskip
We have strong reasons to believe that up to a constant, the above bound for $\log \# \F(x)$ is the best possible. We will discuss more on this in the final section.  
   
\bigskip

\section{Outline of the proof}
In \cite{1} and \cite{2}, the following observations were made:
\medskip

\begin{itemize}
\item[(1)] For any prime $q$,
\begin{equation} 
\label{obs1}
f(q^n)=p(n),
\end{equation} 
where $p(n)$ is the partition function.
\item[(2)] If $p_1, p_2, \dots, p_r$ are distinct primes, then
\begin{equation}
\label{obs2}
f(p_1 \dots p_r) = B_r,
\end{equation}
where $B_r$ is the $r^{\text{th}}$ \emph{Bell number}, which is also the number of partitions of a set having $r$ distinct elements. 
\end{itemize}
\medskip

In view of these observations, we define a generalization of the partition function to the elements of $\N{r}$. 
\begin{ntn}
For any $r \geq 1$, let
\begin{equation}
\label{Z+(r)}
\Z{r}: = (\mb{Z}_{\geq 0})^r \setminus \{\bm{0}\}, \quad \text{where} \ \bm{0}=(0, \dots,0).
\end{equation}
\end{ntn}

\medskip

\begin{defn}
\label{p}
Let $\bm{\al} = (\al_1,\dots,\al_r) \in \N{r}$. A partition of $\bm{\al}$ is an unordered decomposition
$$ \bm{\al} = \bm{\bt_1} + \dots + \bm{\bt_l},$$
where $\bm{\bt_i} \in \Z{r}$, for each $1 \leq i \leq l$ and the addition is component-wise. The number of partitions of $\bm{\al}$ is denoted by $p(\bm{\al})$.
\end{defn}
\begin{exm}
The partitions of $\bm{\al} = (1,2)$ are
$$ (1,2), \quad (1,0) + (0,2), \quad (0,1) + (1,1), \quad (0,1) + (0,1) + (1,0).$$ 
\end{exm}
\begin{rem}
When $r=1$, the above corresponds to the usual \emph{partition function} in $\mb{N}$. Moreover, any such partition $\pi$ 
of $\bm{\al} \in \N{r}$ can be represented as 
$$\pi = \prod\lm_{\bm{\bt} \in \Z{r}} \bm{\bt}^{\pi(\bm{\bt})},$$ 
as in the case $r=1$.
\end{rem}

\begin{rem}
The above function can also be thought of as a partition of the multi-set 
$$\{1,1, \dots , 1, 2, \dots , 2, \dots, r, \dots , r\},$$ 
with each $i$ having exactly $\al_i$ copies, for $1 \leq i \leq r$. When $\al_i=1$ for each $i$, this corresponds to a set-partition, the number of which is given by the $r^{\text{th}}$ \emph{Bell number} $B_r$.
\end{rem}
\medskip
The following lemma generalizes the observations in (\ref{obs1}) and (\ref{obs2}).
\begin{lem}
Let $n=p_1^{\al_1} \dots p_r^{\al_r}$ and $\bm{\al}=(\al_1, \dots, \al_r)$. Then
$$ f(n) = p(\bm{\al}).$$
\end{lem}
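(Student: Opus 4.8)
The plan is to establish a bijection between the set of factorizations of $n = p_1^{\al_1} \cdots p_r^{\al_r}$ and the set of partitions of the exponent vector $\bm{\al} = (\al_1, \dots, \al_r) \in \N{r}$. The natural map sends a factorization $n = n_1 n_2 \cdots n_l$ (with each $n_i > 1$) to the tuple of exponent vectors of the $n_i$. Concretely, since each $n_i$ divides $n$, we may write $n_i = p_1^{\bt_{i,1}} \cdots p_r^{\bt_{i,r}}$, and we associate to $n_i$ the vector $\bm{\bt_i} = (\bt_{i,1}, \dots, \bt_{i,r}) \in (\mb{Z}_{\geq 0})^r$. The condition $n_i > 1$ is exactly the condition that $\bm{\bt_i} \neq \bm{0}$, i.e. $\bm{\bt_i} \in \Z{r}$, and the condition $n = n_1 \cdots n_l$ translates, by comparing exponents of each prime $p_j$ separately, into $\bm{\al} = \bm{\bt_1} + \cdots + \bm{\bt_l}$ with componentwise addition. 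Thus a factorization of $n$ yields a partition of $\bm{\al}$.

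For the reverse direction, given a partition $\bm{\al} = \bm{\bt_1} + \cdots + \bm{\bt_l}$ with $\bm{\bt_i} \in \Z{r}$, I would set $n_i = p_1^{\bt_{i,1}} \cdots p_r^{\bt_{i,r}}$; each $n_i > 1$ since $\bm{\bt_i} \neq \bm{0}$, and $\prod_i n_i = p_1^{\sum_i \bt_{i,1}} \cdots p_r^{\sum_i \bt_{i,r}} = p_1^{\al_1} \cdots p_r^{\al_r} = n$ by the partition relation. So we recover a factorization of $n$. The two maps are clearly mutually inverse. The only point requiring a word of care is that "unordered" means the same thing on both sides: a factorization is an unordered multiset of integer factors, and a partition of $\bm{\al}$ is an unordered multiset of vectors in $\Z{r}$; since $n_i = n_j$ if and only if $\bm{\bt_i} = \bm{\bt_j}$ (as the prime factorization of an integer is unique), the multiset structures correspond, and the bijection is well-defined on the level of unordered decompositions.

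There is really no serious obstacle here — the statement is essentially a reformulation using unique factorization in $\mb{Z}$, which gives the ring isomorphism (of multiplicative monoids) between divisors of $n$ and the lattice points $\bm{\bt}$ with $\bm{0} \leq \bm{\bt} \leq \bm{\al}$. The one thing to be careful about in the write-up is to phrase the argument so that it manifestly generalizes (\ref{obs1}) and (\ref{obs2}): taking $r = 1$ and $\bm{\al} = (n)$ recovers $f(q^n) = p(n)$, and taking $\al_i = 1$ for all $i$ recovers $f(p_1 \cdots p_r) = B_r$ via the multi-set interpretation in the preceding remark. I would therefore keep the proof to a couple of sentences making the bijection explicit, and let the examples from the excerpt (e.g. the four factorizations of $18 = 2 \cdot 3^2$ matching the four partitions of $(1,2)$) serve as the sanity check.
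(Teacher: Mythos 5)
Your proof is correct and follows essentially the same route as the paper: map each factor $n_i$ to its exponent vector $\bm{\bt_i}$, check that the conditions $n_i>1$ and $n=\prod_i n_i$ translate to $\bm{\bt_i}\in\Z{r}$ and $\sum_i \bm{\bt_i}=\bm{\al}$, and invert by unique factorization. Your extra remark on why the unordered/multiset structures match is a slight elaboration of the paper's "clearly unique" step, but the argument is the same.
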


\begin{proof}
Let $n=n_1 n_2 \dots n_l$ be a nontrivial factorization of $n$, with $n_i>1$ for each $i$. For each $1 \leq i \leq l$, let 
$$ n_i = \prod\lm_{j=1}^r p_j^{\bt_{ij}} \quad \text{and} \quad \bm{\bt_i} = (\bt_{i1}, \dots, \bt_{ir}).$$ 
Then, clearly $\bm{\bt_i} \in \Z{r}$ and 
$\sum\lm_{i=1}^l \bm{\bt_i} = \bm{\al}$. Therefore, each unordered factorization gives rise to a partition of $\bm{\al}$. Clearly, the partition obtained in this way is unique. 
The converse follows analogously.
\end{proof}
\medskip
Therefore, $\# \F(x)$ is bounded above by the number of unordered tuples $\bm{\al}=(\al_1, \dots, \al_r)$, which satisfy $p(\bm{\al}) \leq x$. We record this as the following Corollary:

\begin{cor}
\label{fp}
$$ \# \F(x) \leq  \# \{1 \leq \al_1 \leq \dots \leq \al_r: \ p(\bm{\al}) \leq x \}.$$
\end{cor}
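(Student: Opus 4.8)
The plan is to convert a statement about \emph{values} of $f$ into one about \emph{exponent vectors}, using the Lemma immediately above, and then simply compare cardinalities. First I would fix an arbitrary $v \in \F(x)$; by Definition~\ref{F} there is a positive integer $n$ with $f(n) = v \leq x$. If $n = q_1^{e_1} \dots q_r^{e_r}$ is the prime factorization of $n$ (distinct primes $q_i$, each $e_i \geq 1$), I set $\bm{\al} = (e_1, \dots, e_r) \in \N{r}$, and by the Lemma $p(\bm{\al}) = f(n) = v \leq x$.

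Next I would arrange the coordinates in nondecreasing order. Permuting the primes $q_1, \dots, q_r$ changes neither $n$ nor its set of factorizations, and correspondingly $p(\bm{\al})$ is symmetric in the coordinates of $\bm{\al}$; hence I may relabel so that $1 \leq e_1 \leq \dots \leq e_r$. In this way I attach to every $v \in \F(x)$ an ordered tuple $\bm{\al}_v = (e_1, \dots, e_r)$ lying in the set $S(x) = \{1 \leq \al_1 \leq \dots \leq \al_r : p(\bm{\al}) \leq x\}$. The assignment $v \mapsto \bm{\al}_v$ is injective, since $v$ is recovered as $v = p(\bm{\al}_v)$, and therefore $\# \F(x) \leq \# S(x)$, which is exactly the claimed inequality.

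I do not anticipate any genuine obstacle: all the substance sits in the Lemma, and what remains is bookkeeping. The only point worth a line is the boundary case $v = 1$, which arises from $n$ a prime (so $\bm{\al}_v = (1)$ with $p((1)) = 1 \leq x$, hence counted on the right-hand side), while the degenerate $n = 1$ likewise contributes only the value $1$. With the Corollary established, the rest of the paper can focus on the purely combinatorial task of estimating $\# S(x)$, i.e.\ counting nondecreasing exponent vectors $\bm{\al}$ with $p(\bm{\al}) \leq x$.
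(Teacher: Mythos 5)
Your proof is correct and is exactly the argument the paper intends: the Corollary is stated as an immediate consequence of the preceding Lemma ($f(n)=p(\bm{\al})$ for the exponent vector of $n$), with the symmetry of $p$ allowing the coordinates to be sorted and the identity $v=p(\bm{\al}_v)$ giving the injection from values to ordered tuples. No gaps; the handling of the value $1$ is a nicety the paper does not even bother to mention.
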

\medskip
The problem has now reduced to determining the distribution of $p(\bm{\al}) \leq x$. Therefore, we seek a lower bound for $p(\bm{\al})$. 

\begin{prop}
\label{lb}
Let $\bm{\al}=(\al_1, \dots, \al_r) \in \N{r}$. For any $z>0$, let 
\begin{equation}
\label{g(z)}
g(\bm{\al},z) = z \prod\lm_{i=1}^r \left( 1+\fr{\al_i}{z} \right)^{-1}.
\end{equation}
Then $g(\bm{\al},z)$ is a strictly increasing function whose value at $1$ is less than $1$. Let $z(\bm{\al})>1$ be the unique positive real solution to the equation $g(\bm{\al,z})=1$ and let $N=N(\bm{\al})$, be the greatest integer less than or equal to $z(\bm{\al})$, i.e., $N= \floor{z(\bm{\al})} \geq 1$. Then
\begin{itemize}
\item[(a)]
$$p(\bm{\al}) \geq  \fr{e^{N-2}}{2 \, N^{\fr{3}{2}}} 
\prod\lm_{i=1}^r \fr{1}{2\sr{2N}} \left( 1 + \fr{N}{\al_i} \right)^{\al_i + \fr{1}{2}}.$$


\item[(b)] Further, if $p(\bm{\al}) \leq x$, then for $x$ sufficiently large, we have
$$ r \leq R= \fr{2\log x}{\log \log x}\left( 1 + \fr{2\log \log \log x}{\log \log x} \right) \quad \text{and} \quad 
N \leq 3 \log x.$$
\end{itemize}
\end{prop}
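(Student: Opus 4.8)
The plan is to first establish the elementary facts about $g(\bm\al,z)$, then prove the key lower bound (a) via a direct combinatorial/multiplicative argument, and finally deduce the size estimates (b) from (a).

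For the preliminary claims, I would note that $\log g(\bm\al,z) = \log z - \sum_{i=1}^r \log(1+\al_i/z)$, and differentiate: each term $\frac{d}{dz}\log(1+\al_i/z) = \frac{-\al_i/z^2}{1+\al_i/z} = \frac{-\al_i}{z(z+\al_i)}$, so $\frac{d}{dz}\log g = \frac1z + \sum_i \frac{\al_i}{z(z+\al_i)} > 0$, giving strict monotonicity. At $z=1$ we have $g(\bm\al,1) = \prod_i(1+\al_i)^{-1} < 1$ since every $\al_i \ge 1$ forces each factor to be $\le 1/2$. Since $g(\bm\al,z)\to\infty$ as $z\to\infty$, the intermediate value theorem gives a unique solution $z(\bm\al)>1$, and $N = \floor{z(\bm\al)}\ge 1$.

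For part (a), the main idea is to exhibit a large family of partitions of $\bm\al$ and count it. The natural construction: to each coordinate $i$, associate a partition of the single integer $\al_i$ "spread over $N$ roughly equal parts," or more precisely build partitions of $\bm\al$ whose parts are supported near the diagonal. I expect the cleanest route is to use a generating-function / saddle-point style bound: $p(\bm\al) = [\text{coeff of } \prod x_i^{\al_i}] \prod_{\bm\bt\in\Z{r}}(1-\bm{x}^{\bm\bt})^{-1}$, and bound the coefficient below by evaluating the product at a well-chosen point and dividing by an appropriate volume factor. Choosing $x_i = e^{-1/N}\cdot(\text{something involving }\al_i)$ — concretely the point that makes $N$ the "effective number of parts" — one gets $\prod_i(1+N/\al_i)^{\al_i+1/2}$ type factors from the per-coordinate contributions (these are exactly what a Stirling-type estimate on the relevant binomial/partition counts produces), the $e^{N-2}$ factor from the "number of parts $\approx N$" contribution analogous to the classical $p(n)\gg e^{c\sqrt n}$ lower bound specialized here, and the polynomial denominators $N^{3/2}$ and $(2\sqrt{2N})^{-1}$ per coordinate from the Gaussian width of the saddle point in each direction. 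The defining equation $g(\bm\al,z(\bm\al))=1$, i.e. $z = \prod_i(1+\al_i/z)$, is precisely the saddle-point equation that makes this choice optimal, which is why $N$ enters. I would make this rigorous by the standard trick of restricting to partitions using only the $r+1$ "axis-plus-diagonal" part vectors $\bm{e_1},\dots,\bm{e_r}$ and $(1,\dots,1)$, reducing to a product of one-dimensional partition/composition counts that can be bounded by explicit Stirling estimates — this avoids genuine multidimensional saddle-point analysis at the cost of a weaker (but sufficient) constant.

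For part (b), assume $p(\bm\al)\le x$ and feed in the bound from (a). Taking logarithms, $\log x \ge (N-2) - \tfrac32\log N - \log 2 + \sum_{i=1}^r\big[(\al_i+\tfrac12)\log(1+N/\al_i) - \tfrac12\log(8N)\big]$. Since $\al_i\ge 1$, each bracketed summand is bounded below by a positive constant (roughly $\log(1+N) - \tfrac12\log(8N)$, which for $N$ large is $\ge \tfrac14\log N$ say, and in any case $\gg 1$ once $N$ is bounded below), so $r \ll \log x / \log N$. To close the argument one needs a lower bound on $N$: since $z(\bm\al) = \prod_i(1+\al_i/z)\ge (1+1/z)^r$, we get $z\log(1+1/z)\ge r\log\cdots$ — more usefully, if $N$ were very small then the product $\prod_i(1+N/\al_i)^{\al_i+1/2}$ still can't be too large, forcing $r$ small, and then $\bm\al$ has few coordinates each of size controlled by $p(\al_i)\le x$, giving $\al_i \ll (\log x)^2$ and hence $N = \floor{z(\bm\al)}$ controlled. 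Bootstrapping: once $r \le 2\log x/\log\log x \cdot(1+o(1))$ is known, plug back to sharpen; and the term $N-2$ alone gives $N\le \log x + O(\log\log x)\le 3\log x$. The main obstacle I anticipate is getting the constants in (a) exactly as stated — the clean multiplicative form suggests the authors use a slick direct injection of partitions rather than analytic saddle-point, and matching $e^{N-2}$, the exponents $\al_i+\tfrac12$, and the precise denominators will require careful bookkeeping in whichever Stirling-type estimates are invoked; the qualitative shape of both (a) and (b), however, follows robustly from the saddle-point heuristic anchored by the equation $g(\bm\al,z(\bm\al))=1$.
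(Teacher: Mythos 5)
Your treatment of the elementary facts about $g(\bm{\al},z)$ is correct and matches the paper. The gap is in part (a). The one concrete route you offer for making the lower bound rigorous --- restricting to partitions whose parts lie in the set $\{\bm{e_1},\dots,\bm{e_r},(1,\dots,1)\}$ --- cannot work: an \emph{unordered} partition with parts drawn from a fixed list of $r+1$ vectors is just a multiset of those vectors, i.e.\ a choice of multiplicities $(m_0,m_1,\dots,m_r)$ with $m_0+m_i=\al_i$ for all $i$, so there are only $1+\min_i\al_i$ such partitions --- polynomially many, against a target that is exponential in $N+\sum_i\al_i\log(1+N/\al_i)$. The saddle-point alternative you sketch (evaluate $P(\bm{q})$ at a point and divide by a volume factor) yields \emph{upper} bounds for coefficients of positive series; turning it into a lower bound needs a concentration or unimodality input you do not supply. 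The paper's actual mechanism is different and is the heart of the proposition: writing $\log P(\bm{q})=\bigl(\prod_i(1-q_i)^{-1}-1\bigr)+H(\bm{q})$ with $H$ having non-negative coefficients, one gets the Dobi\'nski-type inequality
\begin{equation*}
p(\bm{\al})\ \geq\ \frac{1}{e}\sum_{k\geq 0}\frac{1}{(k+1)!}\prod_{i=1}^r\binom{k+\al_i}{k},
\end{equation*}
and then one keeps the \emph{single} term $k=N$, estimates the factorial and binomials by explicit Stirling-type inequalities, and uses $\prod_i(1+\al_i/N)\geq N$ (from $g(\bm{\al},N)\leq 1$) to cancel the $N^N$. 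That is where the saddle equation enters; none of this appears in your plan.

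Part (b) also has real problems as outlined. Your claim that each summand $(\al_i+\tfrac12)\log(1+N/\al_i)-\tfrac12\log(8N)$ is bounded below by a positive constant fails for small $N$ (at $N=1$ it tends to $1-\tfrac32\log 2<0$ as $\al_i\to\infty$), and even where it holds it gives only $r\ll\log x/\log N$, which does not produce the constant $2$ in $R$ that the main theorem requires; your ``bootstrapping'' step assumes the bound it is meant to establish. The paper instead derives $r\leq R$ directly from the series above via $p(\bm{\al})\geq\frac1e\sum_k k^r/k!\geq\frac1e\lceil r/2\rceil^{\lfloor r/2\rfloor}$, and only then deduces $N\leq 3\log x$ by taking logarithms in (a). So both halves of the proposition ultimately rest on the hypergeometric lower bound that your proposal does not contain.
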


\begin{ntn}
The quantity $N=N(\bm{\al})$ depends entirely on $\bm{\al}$. For sake of simplicity, we write this as $N$.
\end{ntn}
\medskip
We now prove Theorem \ref{main} using Proposition \ref{lb}. We assume throughout, that $x$ is sufficiently large. \vskip 0.07in
Let $\bm{\al} \in \N{r}$ be such that $p(\bm{\al}) \leq x$. Taking logarithm in the inequality in Proposition \ref{lb} (a), and transferring the negative terms to RHS, we obtain
\begin{equation*}
N + \sum\lm_{i=1}^r (\al_i + 0.5) \log \left( 1 + \fr{N}{\al_i}\right) \leq \log x + 0.5 (r+3) \log N + 1.04 \, r + 2.7 .
\end{equation*}
Using the bounds for $N$ and $r$ from Proposition \ref{lb} (b) in the RHS above, and simplifying, we get
\begin{equation}
\label{maineq}
\begin{split}
\sum\lm_{i=1}^r \al_i \log\left( 1+ \fr{N}{\al_i} \right) &\leq  2 \log x \left( 1 + O\left( \fr{\log \log \log x}{\log \log x} \right) \right).
\end{split}
\end{equation}
\medskip
Next, we split the set $\{\al_1,\dots, \al_r\}$ into two parts $I$ and $J$, where
$$I=\{\al_i: \al_i \leq A(N+1)\} \quad \text{and} \quad J=\{\al_i: \al_i>A(N+1)\},$$
and $A>0$ is a positive constant. We shall choose 
\begin{equation}
\label{A}
A = \fr{(\log \log x)^6}{(\log x)^{1/2}}.
\end{equation}
We separately estimate the number of choices for elements in $I$ and $J$. \vskip 0.02in
For elements of $I$,  we have $\al_i \leq A(N+1)$. Therefore, it follows that 
$$ \log \left( 1 + \fr{N}{\al_i} \right) \geq \log\left(1+\fr{N}{A(N+1)}\right) \geq \log\left(1+\fr{1}{2A}\right) 
\geq \fr{\log \log x}{2}  \left( 1 + O\left(\fr{\log \log \log x}{\log \log x}\right) \right).$$
for all $\al_i \in I$. With this applied to (\ref{maineq}), we obtain (ignoring the elements of $J$)
\begin{equation}
\label{tent} 
\sum\lm_{I} \al_i \leq \fr{4 \log x}{\log \log x} \left( 1 + O\left( \fr{\log \log \log x}{\log \log x} \right) \right).
\end{equation}
\medskip
The following lemma gives us the required upper bound for the number of such $\al_i$.
\begin{lem}
\label{maxp}
The number of unordered tuples $(n_1,\dots,n_l)$ of positive integers, for which
$$  \sum\lm_{i=1}^{l} n_i \leq y,$$
is at most $y\exp\left(\pi\sr{2y/3}\right)$, for all $y \geq 1$.
\end{lem}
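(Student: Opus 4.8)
The plan is to recognize the objects counted here as ordinary integer partitions and then to invoke a clean, uniform upper bound for the partition function $p(m)$.

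First I would observe that an unordered tuple $(n_1,\dots,n_l)$ of positive integers with $n_1+\cdots+n_l=m$ is exactly a partition of $m$, so the number of such tuples with fixed sum $m$ equals $p(m)$, and the quantity we must bound is $\sum_{m=1}^{\lfloor y\rfloor} p(m)$. Since $p$ is non-decreasing, this is at most $\lfloor y\rfloor\, p(\lfloor y\rfloor)\le y\, p(\lfloor y\rfloor)$. Hence it suffices to prove the uniform estimate $p(m)\le \exp(\pi\sqrt{2m/3})$ for every integer $m\ge 1$; substituting this and using that $t\mapsto \exp(\pi\sqrt{2t/3})$ is increasing (to replace $\lfloor y\rfloor$ by $y$ inside the exponential) then finishes the proof.

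For the estimate on $p(m)$ I would work with the generating function $F(x)=\prod_{n\ge 1}(1-x^n)^{-1}=\sum_{m\ge 0}p(m)x^m$. For $0<x<1$, nonnegativity of all the coefficients gives $p(m)\le x^{-m}F(x)$. Expanding
\[
\log F(x)=\sum_{k\ge 1}\frac{1}{k}\cdot\frac{x^k}{1-x^k}
\]
and using the AM--GM inequality $1+x+\cdots+x^{k-1}\ge k\,x^{(k-1)/2}$, one gets $\dfrac{x^k}{1-x^k}\le \dfrac{x^{(k+1)/2}}{k(1-x)}\le \dfrac{x}{k(1-x)}$, hence $\log F(x)\le \dfrac{\pi^2}{6}\cdot\dfrac{x}{1-x}$. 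Writing $x=e^{-t}$ with $t>0$ and using $e^t-1\ge t$ then yields $\log p(m)\le mt+\dfrac{\pi^2}{6t}$, and the balancing choice $t=\pi/\sqrt{6m}$ gives precisely $\log p(m)\le \pi\sqrt{2m/3}$.

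The only genuinely substantive step is this last uniform bound for $p(m)$, which is the classical Hardy--Ramanujan-type elementary upper bound; everything else is bookkeeping. Since the true order of $p(m)$ is smaller than $\exp(\pi\sqrt{2m/3})$ by a polynomial factor, there is ample slack, so no special care is needed to make the inequality hold for all $m\ge 1$ — one only checks that the optimization in $t$ reproduces the constant $\pi\sqrt{2/3}$ exactly.
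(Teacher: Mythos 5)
Your proof is correct and follows essentially the same route as the paper: reduce to $\sum_{m\le y}p(m)$ and apply the uniform bound $p(m)\le\exp\left(\pi\sqrt{2m/3}\right)$. The only difference is that the paper simply cites this bound (from the proof of Theorem 15.3 in Nathanson), whereas you supply the standard elementary generating-function proof of it; your derivation of that bound is accurate.
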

\begin{rem}
The bound for the number of solutions above is actually $O(\sr{y} \exp(\pi\sr{2y/3}))$. As this is not quite useful for us, we keep the bound as above to make the proof easier.
\end{rem}
\begin{proof}[Proof of Lemma \ref{maxp}]
Suppose that $\sum\lm_{i=1}^{l} n_i = n \leq y$. From the proof of Theorem 15.3 in \cite[Pg 468]{6}, we have the upper bound
$$ p(n) \leq \exp\left( \pi \sr{2n/3} \right), \quad \text{for all} \ n \geq 1.$$
Therefore, the total number of choices for $n_1, \dots, n_l$ is at most
\begin{equation*}
\sum\lm_{n \leq y} \exp\left( \pi \sr{2n/3} \right) \leq y \exp\left( \pi \sr{2y/3} \right).
\end{equation*}  
\end{proof}

\medskip

Applying Lemma \ref{maxp} to (\ref{tent}), the total number of choices for $\al_i$'s in $I$, is at most
\begin{equation}
\label{maxr1}
\exp\left( 2 \pi \sr{\fr{2\log x}{3\log \log x}} \left( 1 + O\left( \fr{\log \log \log x}{\log \log x} \right) \right) \ \right).
\end{equation}
\vskip 0.08in
Next, we estimate the total number of choices for elements of $J$. Observe that for any $1 \leq i \leq r$, we have 
$p(\al_i) \leq p(\bm{\al}) \leq x$. Moreover, from Corollary 3.1 of \cite{5}, we also have the lower bound
$$ p(n) \geq \fr{\exp(2\sr{n})}{14}, \quad \text{for all} \ n \geq 1.$$
Therefore, in particular, for each $\al_i \in J$, we have
\begin{equation}
\label{log2x}
\al_i \leq \fr{1}{4} (\log 14x)^2 \leq \log^2 x,
\end{equation}
\medskip
In the next lemma, we estimate the cardinality of $J$.
\begin{lem}
\label{r2}
With $J$ as before, we have
$$ \# J  \leq \fr{4 \sr{\log x}}{(\log \log x)^5}.$$
\end{lem}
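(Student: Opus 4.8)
The plan is to bypass the additive inequality (\ref{maineq}) — which is wasteful for very large $\al_i$, since $\al_i\log(1+N/\al_i)$ only ever grows up to $N$ — and instead use directly the equation defining $z(\bm{\al})$. Since $N=\floor{z(\bm{\al})}$ we have $N+1>z(\bm{\al})$, and because $g(\bm{\al},\cdot)$ is strictly increasing with $g(\bm{\al},z(\bm{\al}))=1$ (Proposition \ref{lb}), it follows that $g(\bm{\al},N+1)>1$. Written out via (\ref{g(z)}) this reads $(N+1)\prod_{i=1}^{r}(1+\al_i/(N+1))^{-1}>1$, i.e.
$$ \prod\lm_{i=1}^{r}\left(1+\fr{\al_i}{N+1}\right)<N+1 .$$

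Next I would discard from this product the factors with $\al_i\notin J$ — each of them is $\geq 1$ — and keep only those with $\al_i\in J$. For $\al_i\in J$ we have $\al_i>A(N+1)$, hence $1+\al_i/(N+1)>1+A$; combining this with the bound $N\leq 3\log x$ of Proposition \ref{lb}(b) gives
$$ (1+A)^{\#J}\;\leq\;\prod\lm_{\al_i\in J}\left(1+\fr{\al_i}{N+1}\right)\;\leq\;\prod\lm_{i=1}^{r}\left(1+\fr{\al_i}{N+1}\right)\;<\;N+1\;\leq\;3\log x+1 . $$
Taking logarithms, $\#J<\log(3\log x+1)/\log(1+A)$.

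Finally I would insert the choice (\ref{A}) of $A$ and estimate crudely. For $x$ large we have $A<1$, so $\log(1+A)\geq A-A^2/2\geq A/2$, while $\log(3\log x+1)\leq \log 4+\log\log x\leq 2\log\log x$; hence
$$ \#J\;\leq\;\fr{2\log\log x}{A/2}\;=\;\fr{4\log\log x}{A}\;=\;\fr{4\sr{\log x}}{(\log\log x)^{5}} , $$
which is the assertion. I do not anticipate any real obstacle: the one point requiring thought is the observation that the right tool is the multiplicative constraint $\prod_i(1+\al_i/(N+1))<N+1$ coming from $z(\bm{\al})$ rather than (\ref{maineq}), and correspondingly that the threshold $A(N+1)$ in the definition of $J$ is exactly what makes every surviving factor exceed $1+A$, turning that constraint painlessly into a bound on $\#J$. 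The elementary inequalities for $\log(1+A)$ and $\log(3\log x+1)$ are evidently far from tight, and the constant $4$ is not optimal.
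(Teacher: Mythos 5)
Your proof is correct and is essentially identical to the paper's: both deduce $\prod_{i=1}^{r}(1+\al_i/(N+1))\leq N+1$ from $g(\bm{\al},N+1)\geq 1$, drop the factors outside $J$ to get $(1+A)^{\#J}\leq N+1$, and finish with $\log(1+A)\geq A/2$ and $\log(N+1)\leq 2\log\log x$. No gaps.
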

\begin{proof}
Note that $g(\bm{\al},z)$ is strictly increasing by Proposition \ref{lb}, with $z(\bm{\al})$ being the unique positive real solution to 
$g(\bm{\al},z)=1$. As $N \leq z(\bm{\al}) \leq N+1$, we have $g(\bm{\al},N+1) \geq 1$. Therefore
$$ N+1 \geq \prod\lm_{i=1}^r \left( 1 + \fr{\al_i}{N+1} \right) \geq \prod\lm_{\al_i \in J} \left( 1 + \fr{\al_i}{N+1} \right) 
\geq (1+A)^{\# J},$$ 
since $\al_i>A(N+1)$, for all $\al_i \in J$. \vskip 0.04in
Since $A<1$, we have $\log(1+A) \geq A/2$ and from Proposition \ref{lb}, we have $\log(N+1) \leq \log(1+3\log x) \leq 2 \log \log x$. Hence
$$ \#J \leq \fr{\log(N+1)}{\log(A+1)} \leq \fr{4 \sr{\log x}}{(\log \log x)^5}.$$
 This proves the lemma.
\end{proof}
\medskip
From (\ref{log2x}) and Lemma \ref{r2}, the number of choices for elements of $J$ is at most
\begin{equation}
\label{maxr2}
\left( \log^2 x \right)^{\#J} \leq \exp\left( \fr{8 \sr{\log x}}{(\log \log x)^4} \right),
\end{equation}
\medskip
Therefore, from (\ref{maxr1}) and (\ref{maxr2}), the total number of choices for $\bm{\al}$ is at most
\begin{equation*}
\quad \exp\left( 2\pi\sr{2/3} \sr{ \fr{ \log x}{ \log \log x}} \left( 1 + O\left( \fr{\log \log \log x}{\log \log x} \right) \right)  \right).
\end{equation*}
\medskip	
This completes the proof of Theorem \ref{main}. \vskip 0.07in
It now remains to give a proof of Proposition \ref{lb}.
\bigskip
\section{Preliminary lemmas}
In this section, we prove some Preliminary results. 
\subsection{Bounds on factorials and binomials}
We begin with the following lemma.
\begin{lem}
\label{converge}
Let 
\begin{equation*}
h_1(x)=\left(1+\fr{1}{x}\right)^{x+\fr{1}{2}}, \quad h_2(x)=\fr{x+1}{x+2} \left(1+\fr{1}{x}\right)^{x+\fr{3}{2}}.
\end{equation*}
Then, as $x \to \infty$, the functions $h_1$ and $h_2$ converge to $e$ decreasingly.  
\end{lem}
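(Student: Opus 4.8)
The plan is to route everything through a single clean series identity for $\log h_1$. First I would substitute $t = 1/(2x+1)$, which satisfies $0 < t < 1$ for $x > 0$, and note that then $x + \tfrac12 = 1/(2t)$ and $1 + \tfrac1x = \tfrac{x+1}{x} = \tfrac{1+t}{1-t}$. Applying the standard expansion $\log\frac{1+t}{1-t} = 2\sum_{k\ge 0}\frac{t^{2k+1}}{2k+1}$ gives
\[
\log h_1(x) = \Bigl(x+\tfrac12\Bigr)\log\Bigl(1+\tfrac1x\Bigr) = \frac{1}{2t}\cdot 2\sum_{k\ge 0}\frac{t^{2k+1}}{2k+1} = \sum_{k\ge 0}\frac{1}{(2k+1)(2x+1)^{2k}} = 1 + \frac{1}{3(2x+1)^2} + \frac{1}{5(2x+1)^4} + \cdots.
\]
Every term of this series is positive and strictly decreasing in $x$, and the tail tends to $0$, so $\log h_1(x)$ is strictly decreasing with limit $1$; hence $h_1(x)$ decreases to $e$. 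This also records the fact $h_1(x) > e$ for all $x > 0$, which will be convenient later.

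For $h_2$ I would avoid a direct derivative computation and instead factor it through $h_1$. Since $(x+1)^2 = x(x+2) + 1$, we have
\[
h_2(x) = \frac{x+1}{x+2}\Bigl(1+\frac1x\Bigr)h_1(x) = \frac{(x+1)^2}{x(x+2)}\,h_1(x) = \Bigl(1 + \frac{1}{x(x+2)}\Bigr)h_1(x).
\]
The factor $1 + \frac{1}{x(x+2)}$ is positive and strictly decreasing to $1$ on $(0,\infty)$, and $h_1$ is positive and strictly decreasing to $e$. A product of two positive, strictly decreasing functions is strictly decreasing (if $f,g>0$ are decreasing and $x<y$ then $f(x)g(x) > f(y)g(x) > f(y)g(y)$), so $h_2(x)$ decreases to $e$ as well.

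I do not anticipate a real obstacle here: the only points requiring care are (i) that the logarithmic series converges on the relevant range, which is immediate because $t = 1/(2x+1) \in (0,1)$, together with the term-by-term verification that the sum is decreasing in $x$; and (ii) applying the "product of decreasing functions is decreasing" step only after checking that both $h_1$ and $1 + \frac{1}{x(x+2)}$ are genuinely positive, which they are. If a fully elementary, derivative-based presentation is preferred instead, one can verify $\frac{d}{dx}\log h_1(x) = \log\bigl(1+\tfrac1x\bigr) - \frac{x+\frac12}{x(x+1)} < 0$ by the same substitution $t = 1/(2x+1)$, which rewrites the right-hand side as $2\sum_{k\ge 1}\bigl(\tfrac{1}{2k+1}-1\bigr)t^{2k+1} < 0$; this is the natural fallback.
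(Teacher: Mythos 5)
Your proof is correct. The substitution $t=1/(2x+1)$ cleanly yields $\log h_1(x)=\sum_{k\ge 0}\tfrac{1}{(2k+1)(2x+1)^{2k}}$, which immediately gives the strict decrease to $e$, and the identity $h_2=\bigl(1+\tfrac{1}{x(x+2)}\bigr)h_1$ correctly transfers the conclusion to $h_2$. Note that the paper states this lemma without proof (treating it as a standard calculus fact), so there is no argument in the paper to compare against; your write-up supplies a valid and self-contained justification.
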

\medskip
Next, we obtain bounds for factorials and binomial coefficients.
\begin{lem}
\label{binofact}
Let $n$ and $k$ be positive integers. Then
\begin{itemize}
\item[(a)] 
$$(k+1)! \leq \fr{2 \, k^{k+\fr{3}{2}}}{e^{k-1}},$$
\item[(b)]
$$ \binom{k+n}{k} \geq \fr{1}{2\sr{2}} \fr{(k+n)^{k+n+\fr{1}{2}}}{k^{k+\fr{1}{2}} n^{n+\fr{1}{2}}}.$$
\end{itemize}
\end{lem}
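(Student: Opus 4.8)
The plan is to derive both parts from a single two-sided Stirling estimate that follows from the monotonicity established in Lemma \ref{converge}. I would set $b_k := k!\,e^{k}\,k^{-k-1/2}$ for $k\ge 1$ and compute $b_k/b_{k+1} = e^{-1}(1+1/k)^{k+1/2} = h_1(k)/e$; since $h_1(k)\ge e$ by Lemma \ref{converge}, the sequence $(b_k)$ is non-increasing, and it is classical (Stirling) that $b_k\to\sqrt{2\pi}$. Hence $\sqrt{2\pi}\le b_k\le b_1 = e$ for every $k\ge1$, that is,
\[
\sqrt{2\pi}\,k^{k+1/2}e^{-k}\ \le\ k!\ \le\ e\,k^{k+1/2}e^{-k}\qquad(k\ge1).
\]
Part (a) is then immediate: $(k+1)! = (k+1)\,k! \le (k+1)\,e\,k^{k+1/2}e^{-k} \le 2k\cdot e\,k^{k+1/2}e^{-k} = 2k^{k+3/2}/e^{k-1}$, using $k+1\le 2k$ for $k\ge1$.

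For part (b) I would write $\binom{k+n}{k} = (k+n)!/(k!\,n!)$ and assume $k\le n$ by symmetry. If $k=1$, then $\binom{n+1}{1}=n+1$ and, after cancelling the factor $n+1$, the asserted inequality is precisely $(1+1/n)^{n+1/2}=h_1(n)\le h_1(1)=2\sqrt2$, which is Lemma \ref{converge} again (an equality at $n=1$). If $k\ge2$, hence also $n\ge2$, I would use the non-increasing property of $(b_j)$ to get $k!\le b_2\,k^{k+1/2}e^{-k}$ and $n!\le b_2\,n^{n+1/2}e^{-n}$, where $b_2 = e^{2}/(2\sqrt2)$, together with the lower bound $(k+n)!\ge\sqrt{2\pi}\,(k+n)^{k+n+1/2}e^{-(k+n)}$ above. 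The powers of $e$ cancel, leaving
\[
\binom{k+n}{k}\ \ge\ \frac{\sqrt{2\pi}}{b_2^{\,2}}\cdot\frac{(k+n)^{k+n+1/2}}{k^{k+1/2}n^{n+1/2}} \ =\ \frac{8\sqrt{2\pi}}{e^{4}}\cdot\frac{(k+n)^{k+n+1/2}}{k^{k+1/2}n^{n+1/2}},
\]
and it only remains to verify the numerical inequality $8\sqrt{2\pi}/e^{4}\ge 1/(2\sqrt2)$, equivalently $32\sqrt\pi\ge e^{4}$, which holds since $32\sqrt\pi = 56.7\ldots > 54.6\ldots = e^{4}$.

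The one point needing care is that the inequality in (b) is sharp — an equality at $k=n=1$ — so the crudest form of Stirling is not enough: applying $k!\le e\,k^{k+1/2}e^{-k}$ and $n!\le e\,n^{n+1/2}e^{-n}$ uniformly would demand $\sqrt{2\pi}/e^{2}\ge 1/(2\sqrt2)$, which is false by a hair. Splitting off $k=1$ (handled by the monotonicity of $h_1$ alone, with equality forced at $n=1$) and, in the remaining range $k,n\ge2$, replacing the constant $e$ by the genuinely smaller $b_2 = e^{2}/(2\sqrt2)$ is exactly what recovers the lost room; everything else is routine bookkeeping.
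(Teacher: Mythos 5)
Your proof is correct, but it takes a genuinely different route from the paper. The paper proves both parts by induction on $k$: the inductive step for (a) reduces to $h_2(k)\ge e$, the step for (b) reduces to $h_1(k)\ge h_1(k+n)$, and the base cases $k=1$ are checked directly (for (b) this is $h_1(n)\le h_1(1)=2\sqrt2$, exactly your $k=1$ case). You instead package the monotonicity of $h_1$ into the single statement that $b_k=k!\,e^k k^{-k-1/2}$ is non-increasing, and then import the classical Stirling limit $b_k\to\sqrt{2\pi}$ to get a two-sided bound $\sqrt{2\pi}\,k^{k+1/2}e^{-k}\le k!\le b_m\,k^{k+1/2}e^{-k}$ (for $k \ge m$), from which (a) is immediate and (b) follows after the case split. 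The computations all check: $b_k/b_{k+1}=h_1(k)/e\ge1$, $b_1=e$, $b_2=e^2/(2\sqrt2)$, and $32\sqrt\pi\approx56.7>e^4\approx54.6$. Your diagnosis of why the crude constant $e$ fails for (b) ($\sqrt{2\pi}/e^2\approx0.339<1/(2\sqrt2)\approx0.354$) and the fix via $b_2$ is exactly the right observation, and mirrors the fact that the paper's inductive proof is tight at $k=n=1$. What your approach buys is a reusable numerical Stirling inequality and a transparent accounting of where the constant $2\sqrt2$ comes from; what it costs is the reliance on the exact Stirling constant $\sqrt{2\pi}$ (needed with only about $4\%$ of room to spare in the final numerical check), which is a classical but external fact, whereas the paper's induction is self-contained modulo the elementary Lemma \ref{converge} and never needs to know the limit of $b_k$.
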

\begin{proof}
Proof is by induction on $k$. We first prove (a). \vskip 0.02in
When $k=1$, (a) is trivially true. So, assume that (a) holds for some $k \geq 1$. Then, by induction
\begin{equation}
\label{temp_a} 
(k+2)! = (k+2) (k+1)! \leq \fr{2(k+2) k^{k+\fr{3}{2}}} {e^{k-1}}.
\end{equation}
We need to show that the RHS of (\ref{temp_a}) is at most
$$\fr{2 (k+1)^{k+\fr{5}{2}}}{e^k},$$
which is equivalent to
$$ \fr{k+1}{k+2}\left( 1+\fr{1}{k} \right)^{k + \fr{3}{2}} \geq e,$$
and this is true by Lemma \ref{converge} for the function $h_2$. \vskip 0.07in
Next, we prove (b). When $k=1$, this reduces to
$$ \left( 1+\fr{1}{n} \right)^{n+\fr{1}{2}} \leq 2\sr{2}.$$
This is true from Lemma \ref{converge}, since the function $h_1$ is decreasing and therefore its maximum on the positive integers is attained at $n=1$. \vskip 0.04in
Now, suppose that the (b) holds true for $(k,n)$. Then, by induction
\begin{equation}
\label{temp_b}
\binom{k+n+1}{k+1} =\fr{k+n+1}{k+1} \binom{k+n}{k} \geq 
\fr{1}{2\sr{2}}\fr{(k+n+1)}{(k+1)}\fr{(k+n)^{k+n+\fr{1}{2}}}{k^{k+\fr{1}{2}} n^{n+\fr{1}{2}}}.
\end{equation}
We need to show that the RHS of (\ref{temp_b}) is at least 
$$\fr{1}{2\sr{2}} \fr{(k+n+1)^{k+n+\fr{3}{2}}}{(k+1)^{k+\fr{3}{2}} n^{n+\fr{1}{2}}}.$$
This is equivalent to
$$ \left( 1 + \fr{1}{k} \right)^{k+ \fr{1}{2}} \geq \left( 1 + \fr{1}{k+n} \right)^{k+n+\fr{1}{2}},$$
which is true since $h_1$ is decreasing from Lemma \ref{converge}.
This completes the proof.
\end{proof}
\medskip
\subsection{A generating function for $\tps{p(\bm{\al})}{}$}
We give a generating function for $p(\bm{\al})$, which we later use to obtain a lower bound for $p(\bm{\al})$. We use the following notation:
\begin{ntn}
Let $\bm{q}=(q_1,\dots, q_r)$, with $|q_i| < 1$ for each $1 \leq i \leq r$. For $\bm{\bt} \in \Z{r}$, we use the notation
$$ \bm{q}^{\bm{\bt}} := q_1^{\bt_1} \dots q_r^{\bt_r}.$$
\end{ntn}
\medskip
We have
\begin{lem}
\label{generate}
Let $$ P(\bm{q}) = \prod\lm_{\bm{\bt} \in \Z{r}} \left( 1 - \bm{q}^{\bm{\bt}} \right)^{-1}.$$
Then $P(\bm{q})$ is a generating function for $p(\bm{\al})$ i.e., for any $\bm{\al} \in \N{r}$, the coefficient of 
$\bm{q}^{\bm{\al}}$ in $P(\bm{q})$ is $p(\bm{\al})$. 
\end{lem}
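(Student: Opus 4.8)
The plan is to expand each factor of the infinite product as a geometric series and then read off the coefficient of $\bm{q}^{\bm{\al}}$ by a purely combinatorial count. Since $\ab{\bm{q}^{\bm{\bt}}}<1$ for every $\bm{\bt}\in\Z{r}$, we have
$$ \left(1-\bm{q}^{\bm{\bt}}\right)^{-1}=\sum\lm_{j\geq 0}\bm{q}^{j\bm{\bt}},$$
so that, upon multiplying over all $\bm{\bt}$, every monomial occurring in $P(\bm{q})$ has the form $\bm{q}^{\sum_{\bm{\bt}}\pi(\bm{\bt})\bm{\bt}}$ for some choice of exponents $\pi(\bm{\bt})\in\mb{Z}_{\geq 0}$, almost all of which are zero. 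Extracting the coefficient of $\bm{q}^{\bm{\al}}$ thus amounts to counting the admissible exponent families.

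The one point I would take care to justify is the reduction of the infinite product to a finite one for a fixed target $\bm{\al}=(\al_1,\dots,\al_r)$. Writing $\bm{\bt}\leq\bm{\al}$ for $\bt_i\leq\al_i$ $(1\leq i\leq r)$, note that if $\bm{\bt}\not\leq\bm{\al}$ then every term $\bm{q}^{j\bm{\bt}}$ with $j\geq 1$ in the expansion of $(1-\bm{q}^{\bm{\bt}})^{-1}$ has some coordinate of its exponent exceeding the corresponding coordinate of $\bm{\al}$; such factors therefore cannot influence the coefficient of $\bm{q}^{\bm{\al}}$, and we may replace $P(\bm{q})$ by the finite product $\prod_{\bm{\bt}\in\Z{r},\ \bm{\bt}\leq\bm{\al}}(1-\bm{q}^{\bm{\bt}})^{-1}$ without changing that coefficient. (This also addresses convergence: near $\bm{q}=\bm{0}$ the infinite product converges, so its power series expansion is well defined.) Multiplying out the finite product, the coefficient of $\bm{q}^{\bm{\al}}$ equals the number of families $\bigl(\pi(\bm{\bt})\bigr)_{\bm{\bt}\leq\bm{\al}}$ of nonnegative integers with
$$ \sum\lm_{\bm{\bt}\leq\bm{\al}}\pi(\bm{\bt})\,\bm{\bt}=\bm{\al}.$$

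Finally I would identify each such family with the partition $\pi=\prod_{\bm{\bt}}\bm{\bt}^{\pi(\bm{\bt})}$ of $\bm{\al}$ in the sense of Definition \ref{p}, i.e.\ the unordered decomposition listing each part $\bm{\bt}$ with multiplicity $\pi(\bm{\bt})$; conversely, any partition of $\bm{\al}$ involves only parts $\bm{\bt}\in\Z{r}$ with $\bm{\bt}\leq\bm{\al}$, so this is a bijection onto the set of all partitions of $\bm{\al}$. Hence the coefficient of $\bm{q}^{\bm{\al}}$ in $P(\bm{q})$ is exactly $p(\bm{\al})$. I do not anticipate any real obstacle: the argument is the multivariate analogue of the classical identity $\prod_{k\geq 1}(1-q^k)^{-1}=\sum_{n}p(n)q^n$, and the only step needing a sentence of justification is the observation that a monomial $\bm{q}^{\bm{\al}}$ of bounded multidegree can only be assembled from the finitely many parts $\bm{\bt}\leq\bm{\al}$.
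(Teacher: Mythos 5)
Your proof is correct and follows essentially the same route as the paper: expand each factor $(1-\bm{q}^{\bm{\bt}})^{-1}$ as a geometric series, identify the coefficient of $\bm{q}^{\bm{\al}}$ with the number of exponent families $(\pi(\bm{\bt}))$ summing to $\bm{\al}$, and put these in bijection with the partitions of $\bm{\al}$. Your extra remark that only the finitely many parts $\bm{\bt}\leq\bm{\al}$ can contribute is a nice touch of rigor that the paper subsumes under its appeal to local uniform convergence, but the argument is the same.
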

\begin{rem}
When $r=1$, the above corresponds to the generating function of the partition function $p(n)$.
\end{rem}
\begin{proof}[Proof of Lemma \ref{generate}]
Since the given product converges locally uniformly, we can write it as
\begin{equation}
\begin{split}
P(\bm{q}) &= \prod\lm_{\bm{\bt} \in \Z{r}} \left( \sum\lm_{l=0}^{\infty}  \bm{q}^{l \bm{\bt}} \right)
=\sum\lm_{h: \Z{r} \to \mb{Z}_{\geq 0} }\bm{q}^{h(\bm{\bt})\cdot\bm{\bt} }
\end{split}
\end{equation}
Therefore, the coefficient of $\bm{q}^{\bm{\al}}$ above equals the number of all functions $h :\Z{r} \to \mb{Z}_{\geq 0}$, for which 
$$\sum\lm_{\bm{\bt} \in \Z{r}} h(\bm{\bt}) \cdot\bm{\bt} = \bm{\al}.$$
We show that the above quantity equals $p(\bm{\al})$. Suppose that $\pi$ is a partition of $\bm{\al}$. Then one can write $\pi$ as 
$$ \pi = \prod\lm_{\bm{\bt} \in \Z{r}} \bm{\bt}^{h(\bm{\bt})}.$$
Clearly, the above gives rise to a unique such function $h$. Conversely, any such function $h$ gives a unique product decomposition as above. This completes the proof.
\end{proof}
\medskip
We prove the following lemma about the exponential of a power series:
\begin{lem}
\label{exp+}
Suppose that 
$$F(\bm{q})=a(\bm{0})+\sum\lm_{\bm{n} \in \Z{r}}^{\infty} a(\bm{n}) \bm{q}^{\bm{n}},$$ 
is convergent in $\{\bm{q}:|q_i|<1 \}$, with real coefficients satisfying $a(\bm{n}) \geq 0$, for $\bm{n} \in \Z{r} \cup \{\bm{0}\}$. Then the power series of $G(\bm{q})=\exp(F(\bm{q}))$ around $\bm{0}$ also has non-negative coefficients. 
\end{lem}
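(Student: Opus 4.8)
The plan is to reduce the statement to the elementary fact that a product of power series with non-negative coefficients again has non-negative coefficients, and then to sum the exponential series $\exp = \sum_{k\ge 0}(\cdot)^k/k!$ term by term. Throughout, for a power series $\Phi$ in $\bm{q}$ write $c_{\bm{m}}(\Phi)$ for its coefficient of $\bm{q}^{\bm{m}}$, where $\bm{m}=(m_1,\dots,m_r)$ and set $\ab{\bm{m}}=m_1+\dots+m_r$. First I would strip off the constant term: write $F(\bm{q})=a(\bm{0})+F_1(\bm{q})$ with $F_1(\bm{q})=\sum_{\bm{n}\in\Z{r}}a(\bm{n})\bm{q}^{\bm{n}}$, so that $F_1$ has vanishing constant term, non-negative coefficients, and every monomial occurring in it has total degree $\ge 1$ (since $\ab{\bm{n}}\ge 1$ for $\bm{n}\in\Z{r}$). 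As $\exp$ is entire and $F$ is holomorphic on the polydisc $D=\{\bm{q}:\ab{q_i}<1\}$, the function $G=\exp(F)$ is holomorphic on $D$ and $G(\bm{q})=e^{a(\bm{0})}\exp(F_1(\bm{q}))$ with $e^{a(\bm{0})}\ge 1$; hence it suffices to show $G_1:=\exp(F_1)$ has non-negative Taylor coefficients at $\bm{0}$.

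Next I would analyze the powers $F_1^k$. For each integer $k\ge 0$, $F_1^k$ is a product of $k$ copies of $F_1$, and the coefficient of a fixed monomial $\bm{q}^{\bm{m}}$ in a product of power series is a finite sum of products of coefficients of the factors; hence $c_{\bm{m}}(F_1^k)\ge 0$ for all $\bm{m}$. Since every monomial of $F_1$ has total degree $\ge 1$, every monomial of $F_1^k$ has total degree $\ge k$, so $c_{\bm{m}}(F_1^k)=0$ whenever $k>\ab{\bm{m}}$. Consequently the formal power series $\sum_{k\ge 0}F_1^k/k!$ is well defined, and for each $\bm{m}$ its coefficient of $\bm{q}^{\bm{m}}$ equals the \emph{finite} non-negative sum $\sum_{k=0}^{\ab{\bm{m}}}\frac{1}{k!}c_{\bm{m}}(F_1^k)$.

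It then remains to identify this formal series with the Taylor expansion of $G_1$. On any compact subset of $D$ the continuous function $\ab{F_1}$ is bounded, say by $M$, so the partial sums $S_L(\bm{q})=\sum_{k=0}^{L}F_1(\bm{q})^k/k!$ satisfy $\ab{S_L(\bm{q})-\exp(F_1(\bm{q}))}\le\sum_{k>L}M^k/k!\to 0$ uniformly there; thus $S_L\to G_1$ locally uniformly on $D$. Each $S_L$ is holomorphic with $c_{\bm{m}}(S_L)=\sum_{k=0}^{L}\frac{1}{k!}c_{\bm{m}}(F_1^k)$ (a finite sum of power series), which by the previous paragraph is non-negative and, for $L\ge\ab{\bm{m}}$, independent of $L$. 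Since the Taylor coefficients of a locally uniform limit of holomorphic functions are the limits of the Taylor coefficients (apply Cauchy's integral formula in each variable), $c_{\bm{m}}(G_1)=\lim_{L}c_{\bm{m}}(S_L)\ge 0$, and therefore $c_{\bm{m}}(G)=e^{a(\bm{0})}c_{\bm{m}}(G_1)\ge 0$, as claimed.

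I expect the only genuinely delicate point to be this last interchange of limit and coefficient extraction; everything else is routine bookkeeping about products of non-negative series. An alternative is simply to regard $\exp(F)$ as a \emph{formal} power series defined by $\sum_{k\ge 0}F^k/k!$ — legitimate because $F-a(\bm{0})$ has no constant term, so each coefficient is a finite sum — which makes the non-negativity immediate; but since the statement refers to ``the power series of $G(\bm{q})=\exp(F(\bm{q}))$'', I would keep the short local-uniform-convergence argument above to tie the analytic and formal viewpoints together.
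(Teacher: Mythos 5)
Your proposal is correct and rests on exactly the same idea as the paper's proof: expand $\exp(F)=\sum_{k\ge 0}F^k/k!$ and observe that each power $F^k$ has non-negative coefficients. The only difference is that you carefully justify the steps the paper leaves implicit (stripping off the constant term so each coefficient is a finite sum, and passing the coefficient extraction through the locally uniform limit), which is a welcome but not essentially different elaboration.
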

\begin{proof}
Note that
\begin{equation*}
G(\bm{q}) = \sum\lm_{k=0}^{\infty} \fr{F(\bm{q})^k}{k!}.
\end{equation*}
Now, since $a(\bm{n}) \geq 0$, for each $\bm{n} \in \Z{r}$, it follows that the coefficients of $F(\bm{q})^k$ are non-negative for each $k \geq 0$. Therefore, $G(\bm{q})$ has non-negative coefficients.
\end{proof}
\medskip
Next, we obtain a lower bound for $p(\bm{\al})$.
\begin{lem}
\label{hypergeometric}
Let $\bm{\al} \in \N{r}$. Then
\begin{equation}
\label{hg} p(\bm{\al}) \geq \fr{1}{e}\sum\lm_{k=0}^{\infty} \fr{1}{(k+1)!} \prod\lm_{i=1}^r \binom{k+\al_i}{k}.
\end{equation}
\end{lem}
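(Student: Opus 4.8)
The plan is to work with the generating function $P(\bm{q})$ from Lemma \ref{generate} and to bound its $\bm{q}^{\bm{\al}}$-coefficient from below by comparing $P(\bm{q})$, coefficient by coefficient, with a simpler power series whose coefficients we can write down explicitly.

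First I would take the logarithm of the Euler product and expand each factor:
$$\log P(\bm{q}) = \sum_{\bm{\bt}\in\Z{r}}\sum_{m=1}^{\infty}\fr{1}{m}\,\bm{q}^{m\bm{\bt}}.$$
All coefficients here are non-negative, and discarding the terms with $m\geq 2$ can only decrease them; so, setting $F(\bm{q}) = \sum_{\bm{\bt}\in\Z{r}}\bm{q}^{\bm{\bt}}$, the series $\log P(\bm{q}) - F(\bm{q})$ has non-negative coefficients. Lemma \ref{exp+}, applied to $F$ and to $\log P - F$ separately, shows that $\exp(F(\bm{q}))$ and $\exp(\log P(\bm{q}) - F(\bm{q}))$ both have non-negative coefficients, the latter with constant term $1$ (since $\log P$ and $F$ have zero constant term). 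Writing $P(\bm{q}) = \exp(F(\bm{q}))\cdot\exp(\log P(\bm{q}) - F(\bm{q}))$ and comparing the $\bm{q}^{\bm{\al}}$-coefficients of this product, retaining only the contribution of the constant term of the second factor, gives $p(\bm{\al}) = [\bm{q}^{\bm{\al}}]P(\bm{q}) \geq [\bm{q}^{\bm{\al}}]\exp(F(\bm{q}))$.

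It then remains to compute the right-hand side. Since $\Z{r}$ is $(\mb{Z}_{\geq 0})^r$ with the origin removed, summing a geometric series in each variable gives $F(\bm{q}) = \prod_{i=1}^r (1-q_i)^{-1} - 1$, hence $\exp(F(\bm{q})) = e^{-1}\exp\left(\prod_{i=1}^r(1-q_i)^{-1}\right) = e^{-1}\sum_{k=0}^{\infty}\fr{1}{k!}\prod_{i=1}^r(1-q_i)^{-k}$. The coefficient of $q_i^{\al_i}$ in $(1-q_i)^{-k}$ equals $\binom{k-1+\al_i}{\al_i}$, which vanishes when $k=0$ and $\al_i\geq 1$; so for $\bm{\al}\in\N{r}$ the $k=0$ term drops out, and after reindexing $k\mapsto k+1$ and using $\binom{k+\al_i}{\al_i}=\binom{k+\al_i}{k}$ one obtains exactly $[\bm{q}^{\bm{\al}}]\exp(F(\bm{q})) = \fr{1}{e}\sum_{k=0}^{\infty}\fr{1}{(k+1)!}\prod_{i=1}^r\binom{k+\al_i}{k}$, which is the asserted inequality.

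The step most worth highlighting is the passage from the coefficient-wise bound $\log P(\bm{q}) \geq F(\bm{q})$ to $P(\bm{q}) \geq \exp(F(\bm{q}))$: this uses that exponentiation preserves the coefficient-wise order among power series with non-negative coefficients, which is precisely what Lemma \ref{exp+} delivers via the factorization $P = \exp(F)\cdot\exp(\log P - F)$. Beyond that, the only thing to check is that all rearrangements of these infinite products and exponentials are legitimate, which is immediate since every series in sight has non-negative coefficients and converges absolutely on $\{|q_i|<1\}$. I do not expect a substantial obstacle here; the remainder is routine bookkeeping.
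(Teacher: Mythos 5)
Your argument is correct and follows essentially the same route as the paper: split $\log P(\bm{q})$ into the first-order part $F(\bm{q})=\prod_i(1-q_i)^{-1}-1$ plus a remainder with non-negative coefficients (the paper's $H(\bm{q})$), invoke Lemma \ref{exp+} to discard $\exp(H)$ down to its constant term $1$, and then read off the coefficient of $\bm{q}^{\bm{\al}}$ in $e^{-1}\exp\left(\prod_i(1-q_i)^{-1}\right)$ via the binomial expansion of $(1-q_i)^{-k}$. The only cosmetic difference is that you make explicit the (needed) observation that $\exp(F)$ itself has non-negative coefficients, which the paper leaves implicit.
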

\begin{rem}
The RHS of (\ref{hg}) can be written in terms of a generalized hypergeometric series as
$$\fr{1}{e} \ {}_r F_r\left( \begin{matrix} \al_1 + 1 &\dots&\dots& \al_{r-1}+1 & \al_r+1\\1 & \dots & \dots & 1 & 2 \end{matrix} \ ; \, 1 \right).$$
When $\bm{\al} = (1,1, \dots, 1)$, equality holds in (\ref{hg}) and the RHS of (\ref{hg}) becomes the Dobi\'nski's formula for the $r^{\text{th}}$ Bell number $B_r$.
\end{rem}
\begin{proof}[Proof of Lemma \ref{hypergeometric}]
Taking logarithms in the expression for $P(\bm{q})$ in Lemma \ref{generate}, we get
\begin{equation}
\begin{split}
\label{log}
\log P(\bm{q}) &= \sum\lm_{\bm{\bt} \in \Z{r}} - \log(1-\bm{q}^{\bm{\bt}})=\sum\lm_{\bm{\bt} \in \Z{r}} \sum\lm_{m=1}^{\infty} \fr{\bm{q}^{m \bm{\bt}}}{m}= \sum\lm_{\bm{\bt} \in \Z{r}} \bm{q}^{\bm{\bt}} \sum\lm_{m \mid \bt_i \forall i} \fr{1}{m}\\ 
&= \sum\lm_{\bm{\bt} \in \Z{r}} \fr{\sg(\bt_1,\dots,\bt_r)}{(\bt_1,\dots,\bt_r)}\bm{q}^{\bm{\bt}}\\
&= \sum\lm_{\bm{\bt} \in \Z{r}} \bm{q}^{\bm{\bt}} \  +  \  H(\bm{q}),
\end{split}
\end{equation}
where $\sg(\bt_1,\dots,\bt_r)$ denotes $\sg(\gcd(\bt_1,\dots,\bt_r))$, and
\begin{equation}
\label{H}
H(\bm{q}) = \sum\lm_{\bm{\bt} \in \Z{r}} \left( \fr{\sg(\bt_1,\dots,\bt_r)}{(\bt_1,\dots,\bt_r)}  - 1 \right) \bm{q}^{\bm{\bt}}.
\end{equation}
\medskip
Taking exponential in (\ref{log}), we get
\begin{equation}
P(\bm{q}) = \exp\left( \sum\lm_{\bm{\bt} \in \Z{r}} \bm{q}^{\bm{\bt}} \right) \cdot \exp(H(\bm{q})).
\end{equation}
\medskip
Now, we have
\begin{equation}
\label{1st}
\sum\lm_{\bm{\bt} \in \Z{r}} \bm{q}^{\bm{\bt}}=\sum\lm_{\st{\bt_i, \dots, \bt_r \geq 0}} q_1^{\bt_1} \dots q_r^{\bt_r} - 1=\fr{1}{(1-q_1) \dots (1-q_r)} - 1.
\end{equation}
\medskip
Note that $H(\bm{q})$ has non-negative coefficients with constant term $0$. Therefore, by Lemma \ref{exp+}, $\exp(H(\bm{q}))$ also has non-negative coefficients with constant term $1$. Therefore, the coefficient of $\bm{q}^{\bm{\al}}$ in $P(\bm{q})$ is at least $1/e$ times the  coefficient of $\bm{q}^{\bm{\al}}$ in $\exp\left( \prod\lm_{i=1}^r (1-q_i)^{-1} \right)$. \vskip 0.05in
Since
\begin{equation}
\label{eqk}
\exp\left( \prod\lm_{i=1}^r (1-q_i)^{-1} \right) = 1 + \sum\lm_{k=1}^{\infty}\fr{1}{k!} \prod\lm_{i=1}^r (1-q_i)^{-k},
\end{equation}
and 
$$(1-q)^{-k}=1+\sum\lm_{n=1}^{\infty}\binom{k+n-1}{k-1} q^n,$$ 
the coefficient of $\bm{q}^{\bm{\al}}$ in (\ref{eqk}) equals
\begin{equation*}
\sum\lm_{k=1}^{\infty} \fr{1}{k!} \prod\lm_{i=1}^r \binom{k+\al_i-1}{k-1} 
= \sum\lm_{k=0}^{\infty} \fr{1}{(k+1)!} \prod\lm_{i=1}^r \binom{k+\al_i}{k}. 
\end{equation*}
This completes the proof.
\end{proof}
\medskip
We are now in a position to give a proof of Proposition \ref{lb}.
\bigskip
\section{Proof of Proposition \ref{lb}}
Firstly, we have
$$ g(\bm{\al},z) = z\prod\lm_{i=1}^r \left( 1 + \fr{\al_i}{z} \right)^{-1}.$$
Taking logarithmic derivative, we find that
\begin{equation*}
\fr{g'(\bm{\al},z)}{g(\bm{\al},z)}  =  \fr{r+1}{z} - \sum\lm_{i=1}^r \fr{1}{z+\al_i} >0,
\end{equation*}
for all $z>0$. 
\vskip 0.02in
Therefore, $g(\bm{\al},z)$ is a strictly increasing function in $z$ with $g(\bm{\al},1)<1$. Hence, the equation $g(\bm{\al},z)=1$ must have a unique positive real solution $z(\bm{\al})>1$. Therefore, with $N = \floor{z(\bm{\al})} \geq 1$, one has
\begin{equation}
\label{g(N)}
g(\bm{\al},N) \leq 1 \leq g(\bm{\al},N+1).
\end{equation}
In particular, we have
\begin{equation}
\label{eq7}
\prod\lm_{i=1}^r \left(1 + \fr{\al_i}{N} \right) \geq N.
\end{equation}
\vskip 0.07in
\medskip
We now prove (a). We will use the bound given by a hypergeometric series for $p(\bm{\al})$ from Lemma \ref{hypergeometric}, namely
\begin{equation}
\label{hseries}
 p(\bm{\al}) \geq \fr{1}{e}\sum\lm_{k=0}^{\infty} \fr{1}{(k+1)!} \prod\lm_{i=1}^r \binom{k+\al_i}{k} 
= \fr{1}{e}\sum\lm_{k=0}^{\infty} T(\bm{\al},k).
\end{equation}
We do not have an asymptotic formula for this sum. Fortunately for us, the hypergeometric series converges quite rapidly and therefore only one term $T(\bm{\al},k)$ will be good enough to give a decent lower bound, provided $k$ is optimally chosen. \vskip 0.07in
Applying Lemma \ref{binofact} to $T(\bm{\al},k)$, we have for any $k \geq 1$, that
\begin{equation} 
\begin{split}
\label{T}
T(\bm{\al},k) &\geq  \fr{e^{k-1}}{2 \, k^{k+\fr{3}{2}}} 
\prod\lm_{i=1}^r \fr{1}{2\sr{2}} \fr{(k+\al_i)^{k+\al_i+\fr{1}{2}}}{\al_i^{\al_i+\fr{1}{2}} k^{k+\fr{1}{2}}}\\ 
\end{split}
\end{equation}
\medskip
We make the choice $k=N$ in (\ref{T}), to obtain
\begin{equation}
\label{eq5}
T(\bm{\al},N) \geq \fr{e^{N-1}}{2 \, N^{N + \fr{3}{2}}} 
\prod\lm_{i=1}^r \fr{1}{2\sr{2N}}  \left( 1 + \fr{\al_i}{N} \right)^N \left( 1 + \fr{N}{\al_i} \right)^{\al_i + \fr{1}{2}}.
\end{equation}
Using (\ref{eq7}) in (\ref{eq5}), we get 
\begin{equation*}
\begin{split}
p(\bm{\al}) \geq \fr{T(\bm{\al},N)}{e} \geq  \fr{e^{N-2}}{2 \, N^{\fr{3}{2}}} 
\prod\lm_{i=1}^r \fr{1}{2\sr{2N}} \left( 1 + \fr{N}{\al_i} \right)^{\al_i + \fr{1}{2}}.
\end{split}
\end{equation*}
This proves (a). \vskip 0.07in
We now prove (b). From Lemma \ref{hypergeometric}, we have
\begin{equation}
p(\bm{\al}) \geq \fr{1}{e}\sum\lm_{k=0}^{\infty} \fr{1}{(k+1)!}\prod\lm_{i=1}^r \binom{k+\al_i}{k} 
\geq \fr{1}{e}\sum\lm_{k=1}^{\infty} \fr{k^r}{k!}.
\end{equation}
Taking the term $k=\ceil{r/2}$, and using the inequality
$$ \fr{1}{k!} \geq \fr{1}{k^k}, \quad \text{for all} \ k \geq 1,$$
we obtain
$$ x \geq p(\bm{\al}) \geq \fr{1}{e} \fr{\ceil{r/2}^r}{\ceil{r/2}!} \geq \fr{1}{e}\ceil{r/2}^{\floor{r/2}}.$$
From this, it follows that $r \leq R$. \vskip 0.07in

To show $N \leq 3 \log x$, we take logarithms in (a) of Proposition \ref{lb}, to get
$$ N - 1.04 R - 0.5(R+3) \log N - \log x -2.7 \leq 0.$$
\medskip
Substituting $R$, it follows that $N \leq 3 \log x$. This completes the proof of Proposition \ref{lb}. 
\bigskip
\section{Concluding remarks}
We believe that the bound in Theorem \ref{main} is essentially the best possible due to the following reasons. Let
$$ S = \left\{\bm{\al}: \al_i \leq \sr{\log x} \ \forall \ i, \ \, \sum \al_i \leq \fr{B\log x}{\log \log x} \right\}.$$
\medskip

Then, for each $\bm{\al} \in S$, we have $p(\bm{\al}) = O(x)$. Moreover, the number of elements in this set is at least 
$\exp\left(c_1 \sr{\fr{\log x}{\log \log x}}\right)$. But we are not able to show that the values of $p(\bm{\al})$, as $\bm{\al}$ runs through $S$, are distinct. However, some calculations seem to show that the number of distinct values of $p(\bm{\al})$ above are also having a similar lower bound. We shall return to this later.
\bigskip



\begin{thebibliography}{12}
\medskip
\bibitem[BL]{4} R. Balasubramanian and F. Luca, {\em On the number of factorizations of an integer}, Integers \textbf{11} (2011), A12, 5, MR 2798647.
\medskip
\bibitem[CEP]{2} E. R. Canfield, P. Erd\H{o}s and C. Pomerance, {\em On a problem of Oppenheim
concerning ``factorisatio numerorum"}, J. Number Theory \textbf{17} (1983), 1-28.
\medskip
\bibitem[LMS]{3} F. Luca, A. Mukhopadhyay and K. Srinivas, {\em Some results on Oppenheim's ``Factorisatio Numerorum" function}, Acta Arithmetica \textbf{142} (2010), no. 1, 41-50,  MR2601047.
\medskip
\bibitem[Mar]{5} A. Ma\H{r}oti, {\em On elementary lower bounds for the partition function}, Integers \textbf{3} (2003), A10.
\medskip
\bibitem[Nat]{6} M. B. Nathanson, {\em Elementary Methods in Number theory}, Springer-Verlag, 1999.
\medskip
\bibitem[Opp]{1} A. Oppenheim, {\em On an arithmetic function}, J. London Math. Soc. \textbf{1} (1926), 205-211; part II in \textbf{2} (1927), 123-130.  
\end{thebibliography}
\end{document}